\newcommand{\n}{\hspace*{-6pt}}
\DeclareMathOperator{\unif}{uniform}
\newcommand{\ceil}[1]{\lceil #1\rceil}
\subjclass{91C20, 91D25, 91D30, 94C15}
\keywords{Hegselmann-Krause model, opinion dynamics, probability of consensus, social network, critical value}
\title{A straightforward proof of the critical value in the Hegselmann-Krause model: up to one-half}
\author{Hsin-Lun Li}
\date{}
\email{hsinlunl@math.nsysu.edu.tw}
\theoremstyle{definition}
\newtheorem{theorem}{Theorem}
\newtheorem{lemma}[theorem]{Lemma}
\begin{document}

\allowdisplaybreaks

\thispagestyle{firstpage}
\maketitle
\begin{center}
    Hsin-Lun Li
    \centerline{$^1$National Sun Yat-sen University, Kaohsiung 804, Taiwan}
\end{center}
\medskip

\begin{abstract}
   In the Hegselmann-Krause model, an agent updates its opinion by averaging with others whose opinions differ by at most a given confidence threshold. With agents' initial opinions uniformly distributed on the unit interval, we provide a straightforward proof that establishes the critical value is up to one-half. This implies that the probability of consensus approaches one as the number of agents tends to infinity for confidence thresholds larger than or equal to one-half.
\end{abstract}

\section{Introduction}
The Hegselmann-Krause model comprises a finite set of agents, denoted as \( [n] = \{1, 2, \ldots, n\} \), each updating their opinions by averaging with opinion neighbors: the agents whose opinions differ by at most a confidence threshold \( \epsilon > 0 \). Interpreting in math, the update mechanism goes as follows:
$$x_i(t+1)=\frac{1}{|N_i(t)|}\sum_{j\in N_i(t)}x_j(t)$$ where
$$\begin{array}{rl}
     x_i(t)&\n=\hbox{opinion of agent $i$ at time $t$},\vspace{2pt} \\
     N_i(t)&\n=\{j\in [n]: |x_i(t)-x_j(t)|\leq \epsilon\}. 
\end{array}$$
In the synchronous Hegselmann-Krause model, every agent simultaneously updates their opinion at the next time step. In contrast, the asynchronous Hegselmann-Krause model involves only one uniformly selected agent updating its opinion at a time. Our analysis concentrates on the synchronous Hegselmann-Krause model within the unit interval \( [0,1] \), where the initial opinions of all agents are uniform random variables on $[0,1]$, expressed as \(x_i(0) = \unif([0,1])\) for all \(i \in [n]\). Denote \(x_{(i)}\) as the \(i\)th smallest number among \((x_j)_{j=1}^n\). An \emph{opinion graph} at time \(t\), represented as \(\mathscr{G}(t)\), is a graph with vertex set \( [n]\) and edge set \(E(t) = \{(i,j): |x_i(t) - x_j(t)| \leq \epsilon\}\). The notation \(\alpha \ll \beta\) signifies that \(|\alpha| \leq |\beta|\).

The Hegselmann-Krause model and the Deffuant model, as described in \cite{castellano2009statistical}, stand out as two popular models in opinion dynamics. In \cite{mHK}, the author introduced a mixed model capable of interpreting both the Hegselmann-Krause model and, as argued in \cite{mHK2}, the Deffuant model. The critical value of the Deffuant model is established as one-half in \cite{MR3069370}. Lower bounds for the probability of consensus in the Deffuant model and the Hegselmann-Krause model are respectively addressed in \cite{lanchier2020probability} and \cite{lanchier2022consensus}.

Unlike the Deffuant model, where agents interact in pairs, the Hegselmann-Krause model involves agents interacting in groups, constituting a more intricate interaction system. In our work, we provide a straightforward proof indicating that the critical value reaches up to one-half. Specifically, the probability of consensus tends toward one as the number of agents goes to infinity when the confidence threshold is larger than or equal to one-half.

\section{Main results}
\begin{theorem}\label{thm:critical value}
    The critical value in the Hegselmann-Krause Model is up to one-half.
\end{theorem}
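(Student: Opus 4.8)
The plan is to prove the stronger statement that, for every confidence threshold $\epsilon\ge\tfrac12$, the probability that all $n$ agents merge into a single consensus cluster tends to one as $n\to\infty$. I would first record the standard reduction: the HK dynamics preserves the ordering of opinions and converges to a fixed configuration whose distinct clusters are separated by gaps larger than $\epsilon$, so \emph{consensus occurs if and only if the opinion graph $\mathscr{G}(t)$ stays connected for every $t$}, i.e.\ if and only if the largest consecutive spacing $\max_i\bigl(x_{(i+1)}(t)-x_{(i)}(t)\bigr)$ never exceeds $\epsilon$. It therefore suffices to show that, with probability tending to one, no consecutive gap ever exceeds $\epsilon$ and the diameter $\diam(t)=x_{(n)}(t)-x_{(1)}(t)$ drops to at most $\epsilon$ within a bounded number of steps; once $\diam(t)\le\epsilon$ the graph is complete, every agent moves to the common mean, and consensus follows.

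The two ingredients are a \emph{no-splitting} estimate and a \emph{contraction} estimate, both driven by the near-uniformity of the data and by the special role of $\epsilon\ge\tfrac12$. From the initial uniform sample I would extract, with high probability, that the order statistics satisfy $x_{(k)}(0)\approx k/n$, that the maximal spacing is $O(\log n/n)$, and that the empirical distribution function is uniformly $O(n^{-1/2}\sqrt{\log n})$-close to the identity (Dvoretzky--Kiefer--Wolfowitz). The point of $\epsilon\ge\tfrac12$ is that any agent's $\epsilon$-neighborhood covers at least half of the current support (since the support has length at most $1\le2\epsilon$), so $|N_i(t)|\ge(\tfrac12-o(1))\,n$ throughout. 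Consequently two order-adjacent agents with spacing $g$ have neighbor sets whose symmetric difference contains only $O(gn)$ agents, giving $x_{(i+1)}(t+1)-x_{(i)}(t+1)\le C\,g$ for a constant $C$; hence the maximal spacing grows by at most a bounded factor per step and, over a bounded time horizon, stays $O\bigl(C^{\,T}\log n/n\bigr)=o(1)\le\epsilon$.

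For contraction I would compare the averaging map with its continuum limit: on a uniform density the update acts as the affine, order-preserving, $\tfrac12$-Lipschitz map sending an agent at $x$ to the midpoint of its clipped $\epsilon$-window, which carries $[0,1]$ onto $[\epsilon/2,\,1-\epsilon/2]$ and so shrinks the diameter from $1$ to $1-\epsilon\le\epsilon$ in one step, reaching consensus in the next. Transferring this to the discrete system through the empirical-CDF bounds of the previous paragraph shows $\diam(t)$ decreases to at most $\epsilon+o(1)$ within finitely many steps. I expect the main obstacle to be the borderline case $\epsilon=\tfrac12$, where the continuum diameter after one step equals exactly $\epsilon$, so the $o(1)$ fluctuations may leave the two extreme agents just over $\epsilon$ apart and the graph not quite complete; I would absorb this by iterating two or three further steps, using that once $\diam(t)\approx\epsilon$ every agent's neighborhood covers all but an $o(1)$ fraction of the (now short) support, which forces the diameter strictly below $\epsilon$ and restores completeness. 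The heaviest technical work throughout is propagating the law-of-large-numbers control through the discontinuous neighbor-counting indicators, uniformly over all agents and over the bounded number of time steps.
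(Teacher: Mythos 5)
Your proposal is correct in outline, but it follows a genuinely different route from the paper. The two arguments share only the reduction to connectivity (Lemmas~\ref{equvalence relation for consensus} and \ref{equivalent relation for connectedness}); beyond that, the paper never tracks where the opinions actually move. Its engine is the deterministic edge-persistence criterion of Lemma~\ref{condition to keep edge} --- an edge $(i,j)\in E(t)$ survives one update whenever $2\max\{|N_i(t)-N_j(t)|,\,|N_j(t)-N_i(t)|\}\le|N_i(t)\cap N_j(t)|$, proved by matching terms and the triangle inequality --- combined with a single strong-law estimate at $t=0$ showing that for $\epsilon\ge 1/2$ any two initial neighbors share roughly $\epsilon n\ge n/2$ common neighbors. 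The paper then inducts: the two ``hub'' edges joining the extreme agents $(1)$ and $(n)$ to the median agent $(\lceil n/2\rceil)$ persist for all time, and since every opinion lies between $x_{(1)}$ and $x_{(n)}$, these two edges alone keep $\mathscr{G}(t)$ connected as a star through the median, whence consensus. You instead track the dynamics itself through its continuum limit (DKW control of the empirical measure, the limiting update map carrying $[0,1]$ onto $[\epsilon/2,\,1-\epsilon/2]$ so that the diameter contracts to $1-\epsilon\le\epsilon$ in one step, plus two or three extra steps to absorb the $o(1)$ fluctuations in the borderline case $\epsilon=1/2$). What the paper's route buys is brevity: randomness enters only at time zero, the induction is purely combinatorial, and no uniform-in-time empirical-process estimates are needed. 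What your route buys is quantitative information --- consensus within an explicit bounded number of steps and explicit diameter decay --- together with a direct finish: once $\diam(t)\le\epsilon$ the graph is complete and every agent jumps to the common mean, so you never actually need the connectivity-forever criterion, which incidentally makes your no-splitting estimate largely redundant. The price is exactly the one you identify: propagating uniform law-of-large-numbers control through several steps of the discontinuous neighbor-counting dynamics (note that your ``spacings grow by at most a constant factor per step'' claim needs care, since after one update the local density is only controlled up to the DKW error; the weaker conclusion that all spacings remain $o(1)\le\epsilon$ over a bounded horizon is what survives and is what you need). That technical layer would constitute the bulk of a rigorous write-up and is substantially heavier than anything appearing in the paper.
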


\section{The model}
It has been established in \cite{mHK} that all agents' opinions converge in finite time within the synchronous Hegselmann-Krause model. We have identified several key properties of this model on the unit interval, including the order-preserving nature of all opinions. Additionally, we demonstrate that the opinion graph is almost surely connected initially and remains connected over time as $n \to \infty$ for a confidence threshold $\epsilon \geq 1/2$. This observation implies that the probability of consensus approaches one as $n \to \infty$ for $\epsilon \geq 1/2$.

\begin{lemma}[\cite{mHK}]\label{matching}
Given $\lambda_1,...,\lambda_n$ in $\mathbf{R}$ with $\sum_{i=1}^n\lambda_i=0$ and $x_1,...,x_n$ in $\mathbf{R^d}$. Then for $\lambda_1x_1+\lambda_2x_2+...+\lambda_nx_n,$ the terms with positive coefficients can be matched with the terms with negative coefficients in the sense that there are nonnegative values $c_i$ such that
\begin{align*}
     \sum_{i=1}^n\lambda_ix_i=\sum_{i,c_i\geq0,j,k\in[n]} c_{i}(x_{j}-x_{k})\text{ and }\sum_i c_i=\sum_{j,\lambda_j\geq0}\lambda_j.
\end{align*}
\end{lemma}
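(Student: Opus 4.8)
The plan is to prove the lemma by reading it as a transportation (or flow) problem and then producing the required decomposition by induction on the number of nonzero coefficients. Write $P=\{i:\lambda_i>0\}$ and $Q=\{i:\lambda_i<0\}$, and set $S=\sum_{j:\lambda_j\geq0}\lambda_j$. Since $\sum_{i=1}^n\lambda_i=0$, the positive and negative parts balance: $\sum_{i\in P}\lambda_i=-\sum_{k\in Q}\lambda_k=S$. The goal is therefore to exhibit nonnegative weights attached to pairs $(j,k)$ with $j\in P$ and $k\in Q$ so that each positive index $j$ \emph{sends} total weight $\lambda_j$ and each negative index $k$ \emph{receives} total weight $|\lambda_k|$. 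Once such a plan exists, expanding $\sum c\,(x_j-x_k)$ and collecting the coefficient of each $x_i$ recovers exactly $\lambda_i$, and the total weight is $S$.

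First I would set up the induction on $N=\#\{i:\lambda_i\neq0\}$. The base case $N=0$ is immediate, since both sides are empty sums and $S=0$. For the inductive step, because the $\lambda_i$ sum to zero and are not all zero, both $P$ and $Q$ are nonempty; I pick any $p\in P$ and $q\in Q$ and set $c=\min(\lambda_p,\,|\lambda_q|)>0$. I would peel off the single term $c\,(x_p-x_q)$ and pass to modified coefficients $\lambda'$, defined by $\lambda'_p=\lambda_p-c$, $\lambda'_q=\lambda_q+c$, and $\lambda'_i=\lambda_i$ otherwise. These still sum to zero, and the identity $\sum_i\lambda_ix_i=c\,(x_p-x_q)+\sum_i\lambda'_ix_i$ holds termwise.

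The key bookkeeping step is to verify two facts about $\lambda'$: that the number of nonzero coefficients strictly drops, and that the positive part decreases by exactly $c$. By the choice $c=\min(\lambda_p,|\lambda_q|)$, at least one of $\lambda'_p,\lambda'_q$ becomes zero, so $N$ strictly decreases. A short case split on whether $c=\lambda_p$ or $c=|\lambda_q|$ shows in both cases that $\sum_{j:\lambda'_j\geq0}\lambda'_j=S-c$: reducing a positive coefficient by $c$, or zeroing out a negative coefficient, both lower the positive part by $c$ while keeping every remaining positive coefficient nonnegative. Applying the inductive hypothesis to $\lambda'$ yields a decomposition of $\sum_i\lambda'_ix_i$ with total weight $S-c$; adding back the term $c\,(x_p-x_q)$ gives the desired decomposition with total weight $c+(S-c)=S$.

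I expect the main obstacle to be organizational rather than conceptual: making the indexing in the conclusion precise, so that each summand is a single nonnegative scalar times one difference $x_j-x_k$ with $j,k$ allowed to vary across summands, and checking cleanly that the positive part drops by exactly $c$ at every peeling step. An equivalent route I would keep in reserve is the explicit greedy ``northwest-corner'' construction, which repeatedly matches a positive index against a negative index and removes the minimum of the two remaining magnitudes; it terminates in at most $\#P+\#Q-1$ steps and produces the same transportation plan without an explicit induction. The inductive phrasing above, however, keeps the accounting of $\sum_i c_i=S$ most transparent.
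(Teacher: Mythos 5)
Your proof is correct: the inductive peeling step $c=\min(\lambda_p,|\lambda_q|)$ with the verification that the nonzero count strictly drops and the positive part decreases by exactly $c$ gives exactly the claimed decomposition with $\sum_i c_i=\sum_{j,\lambda_j\geq 0}\lambda_j$. Note that this paper states the lemma by citation to \cite{mHK} without reproducing a proof, and your greedy/transportation-style matching is essentially the standard argument used in that source, so there is no material difference in approach.
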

Lemma~\ref{matching} implies that the order of opinions among all agents persists over time.

\begin{lemma}[order-preserving]\label{order-preserving} 
If $x_i(t)\leq x_j(t)$, then $x_i(t+1)\leq x_j(t+1).$
\end{lemma}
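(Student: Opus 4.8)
The plan is to apply Lemma~\ref{matching} to the difference $x_j(t+1)-x_i(t+1)$ after exhibiting it as a linear combination of the current opinions whose coefficients sum to zero. Writing the update rule out,
\begin{align*}
x_j(t+1)-x_i(t+1)=\sum_{k\in[n]}\lambda_k\,x_k(t),\qquad \lambda_k=\frac{\indicator{k\in N_j(t)}}{|N_j(t)|}-\frac{\indicator{k\in N_i(t)}}{|N_i(t)|},
\end{align*}
and since each neighborhood contributes total weight one, $\sum_k\lambda_k=1-1=0$. Lemma~\ref{matching} then guarantees a decomposition $x_j(t+1)-x_i(t+1)=\sum_\ell c_\ell\bigl(x_{p_\ell}(t)-x_{q_\ell}(t)\bigr)$ with $c_\ell\geq0$, where each $p_\ell$ is an index carrying a positive coefficient and each $q_\ell$ an index carrying a negative coefficient. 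So it suffices to show that every agent with $\lambda_k>0$ has an opinion at least as large as every agent with $\lambda_k<0$; then each matched difference is nonnegative and the claim follows.

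The key structural input is that, under the order on opinions, both $N_i(t)$ and $N_j(t)$ are contiguous blocks in the sorted order $x_{(1)}\le\dots\le x_{(n)}$, and the assumption $x_i(t)\le x_j(t)$ shifts $N_j(t)$ to the right of $N_i(t)$. Concretely, $N_i(t)=\{k:x_i(t)-\epsilon\le x_k(t)\le x_i(t)+\epsilon\}$ is an interval of positions $[a_i,b_i]$, and since $x_i(t)-\epsilon\le x_j(t)-\epsilon$ and $x_i(t)+\epsilon\le x_j(t)+\epsilon$, the corresponding interval $[a_j,b_j]$ for $N_j(t)$ satisfies $a_i\le a_j$ and $b_i\le b_j$. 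I then partition the relevant agents into $A=N_i(t)\setminus N_j(t)$ (the leftmost positions), $B=N_i(t)\cap N_j(t)$ (the middle), and $C=N_j(t)\setminus N_i(t)$ (the rightmost), so that opinions do not decrease as one passes from $A$ to $B$ to $C$.

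Reading off the coefficient signs from this partition, $\lambda_k=1/|N_j(t)|>0$ on $C$, $\lambda_k=-1/|N_i(t)|<0$ on $A$, and $\lambda_k=1/|N_j(t)|-1/|N_i(t)|$ on $B$, whose sign depends only on which neighborhood is larger. Checking the cases shows that the set of positive-coefficient agents always lies to the right of the set of negative-coefficient agents: if $|N_j(t)|\le|N_i(t)|$ the positive agents sit in $B\cup C$ and the negative ones in $A$, while if $|N_j(t)|>|N_i(t)|$ the positive agents sit in $C$ and the negative ones in $A\cup B$. Either way every positive-coefficient opinion dominates every negative-coefficient opinion, so each term $x_{p_\ell}(t)-x_{q_\ell}(t)$ in the matching is nonnegative, yielding $x_j(t+1)\ge x_i(t+1)$.

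The main obstacle is the overlap region $B=N_i(t)\cap N_j(t)$, where the coefficient can take either sign; handling it is precisely what forces the case split on $|N_i(t)|$ versus $|N_j(t)|$. The point I expect to need care with is verifying that the block/interval description of the neighborhoods is legitimate even in the presence of ties among opinions, and that the endpoint inequalities $a_i\le a_j$ and $b_i\le b_j$ hold as stated. Once that monotone-interval picture is secured, the sign separation, and hence the conclusion, are immediate.
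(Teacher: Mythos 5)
Your proposal is correct and follows essentially the same route as the paper's proof: write $x_j(t+1)-x_i(t+1)$ as a zero-sum linear combination of current opinions, invoke Lemma~\ref{matching}, split into cases according to whether $|N_i(t)|$ or $|N_j(t)|$ is larger, and conclude by showing every positive-coefficient opinion dominates every negative-coefficient one. The only cosmetic difference is that you justify this domination via the sorted-interval picture of the neighborhoods ($A$, $B$, $C$ blocks), whereas the paper proves the equivalent separation claims directly by contradiction from the definitions of $N_i(t)$ and $N_j(t)$; your worry about ties is harmless, since the separation inequalities (opinions in $N_i\setminus N_j$ are below $x_j(t)-\epsilon$, those in $N_j\setminus N_i$ are above $x_i(t)+\epsilon$) hold at the level of opinion values regardless of how ties are ordered.
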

\begin{proof}
Let $x=x(t),x'=x(t+1),$ and $N_i=N_i(t)$ for all $i\in[n].$ From Lemma \ref{matching},
\begin{align*}
    &x_j'-x_i'=(\frac{1}{|N_j|}-\frac{1}{|N_i|})\sum_{k\in N_i\cap N_j}x_k+\frac{1}{|N_j|}\sum_{k\in N_j-N_i}x_k-\frac{1}{|N_i|}\sum_{k\in N_i-N_j}x_k\\
    &=\left\{\begin{array}{lr}
         \displaystyle \sum_{k,p\in N_j-N_i,q\in N_i}a_k(x_p-x_q)&\hbox{if}\ |N_j|\geq |N_i|  \\
         \displaystyle \sum_{k,p\in N_j,q\in N_i-N_j}a_k(x_p-x_q)&\hbox{else,}
    \end{array}\right.
\end{align*}
 where $a_k\geq0$ for all $k.$ We claim that
 \begin{enumerate}
     \item If $q\in N_i$ and $p\in N_j-N_i$, then $x_q<x_p$.\label{v1}\vspace{2pt}
     \item If $q\in N_i-N_j$ and $p\in N_j$, then $x_q<x_p$.\label{v2}
 \end{enumerate}
 \begin{proof}[Proof of Claim \ref{v1}]
 Assume by contradiction that there exist $q\in N_i$ and $p\in N_j-N_i$ such that $x_q\geq x_p$. Then, $x_j-\epsilon\leq x_p< x_i-\epsilon$, a contradiction.
 \end{proof} 
 \begin{proof}[Proof of Claim \ref{v2}]
  Assume by contradiction that there exist $q\in N_i-N_j$ and $p\in N_j$ such that $x_q\geq x_p$. Then, $x_i+\epsilon\geq x_q> x_j+\epsilon$, a contradiction.
 \end{proof}
 Either way, $x_j'-x_i'\geq0.$ This completes the proof.
\end{proof}
Lemma~\ref{order-preserving} reveals that when the opinion graph is disconnected, it remains so over time.

\begin{lemma}[disconnected-preserving]\label{disconnected-preserving} 
If $\mathscr{G}(t)$ is disconnected, then $\mathscr{G}(t+1)$ remains disconnected.
\end{lemma}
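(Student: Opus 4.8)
The plan is to prove the contrapositive-style statement directly: assuming $\mathscr{G}(t)$ is disconnected, I would exhibit a partition of $[n]$ that witnesses disconnection at time $t+1$. The natural idea is to use the order-preserving structure from Lemma~\ref{order-preserving}. Since opinions are one-dimensional and their order is frozen over time, disconnection of an opinion graph on $[0,1]$ has a very rigid form: $\mathscr{G}(t)$ is disconnected if and only if there is a \emph{gap} in the sorted opinions, i.e.\ some index $i\in[n-1]$ with $x_{(i+1)}(t)-x_{(i)}(t)>\epsilon$. This is because an edge exists exactly when consecutive sorted opinions are within $\epsilon$, so a single gap larger than $\epsilon$ splits the agents into a ``left block'' $L=\{j:x_j(t)\le x_{(i)}(t)\}$ and a ``right block'' $R=\{j:x_j(t)\ge x_{(i+1)}(t)\}$ with no edge between them.

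First I would fix such a gap at time $t$ and set $\delta = x_{(i+1)}(t)-x_{(i)}(t)>\epsilon$, with $L$ and $R$ the two resulting blocks. The key observation is that every agent in $L$ has all of its opinion neighbors inside $L$ (no agent in $R$ is within $\epsilon$ of it), and symmetrically for $R$; hence at the update step, each $L$-agent averages only over opinions that are $\le x_{(i)}(t)$, and each $R$-agent averages only over opinions that are $\ge x_{(i+1)}(t)$. Consequently $x_j(t+1)\le x_{(i)}(t)$ for all $j\in L$ and $x_k(t+1)\ge x_{(i+1)}(t)$ for all $k\in R$. Therefore the new gap between the top of $L$ and the bottom of $R$ at time $t+1$ is still at least $\delta>\epsilon$, so no edge crosses between $L$ and $R$ at time $t+1$, and $\mathscr{G}(t+1)$ is disconnected.

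The main step to justify carefully is the claim that neighbors stay within a block, i.e.\ that the gap genuinely isolates $L$ from $R$ for the averaging; this is where order-preservation (Lemma~\ref{order-preserving}) does the real work, since it guarantees the blocks cannot interleave at time $t+1$ even though individual opinions move. An alternative and perhaps cleaner route is to invoke Lemma~\ref{order-preserving} abstractly rather than reasoning about averages: since the order is preserved, $\mathscr{G}(t)$ disconnected means $[n]$ splits into consecutive-order blocks with a $>\epsilon$ gap, and I would argue that the averaging contracts each block toward its own interior so the separating gap cannot shrink below $\epsilon$. I expect the main obstacle to be the bookkeeping showing the gap does not close: one must verify that the largest $L$-opinion does not increase past $x_{(i)}(t)$ and the smallest $R$-opinion does not decrease past $x_{(i+1)}(t)$, which follows because each averages only over same-block values but requires stating precisely that $N_j(t)\subseteq L$ for $j\in L$ and $N_k(t)\subseteq R$ for $k\in R$.
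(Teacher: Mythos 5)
Your proof is correct, and it finishes differently from the paper's in a way worth noting. The paper tracks only the two agents adjacent to the gap: writing $x_1(t)\le\dots\le x_n(t)$ with $x_{i+1}(t)-x_i(t)>\epsilon$, it observes that agent $i$ has no neighbors on its right and agent $i+1$ none on its left, so $x_i(t+1)\le x_i(t)$ and $x_{i+1}(t+1)\ge x_{i+1}(t)$, whence the gap between these two specific agents persists; it then genuinely \emph{needs} Lemma~\ref{order-preserving} to conclude, because a gap between two particular agents forces disconnection only if those agents remain consecutive in the sorted order at time $t+1$. Your argument instead bounds \emph{every} agent in each block: the gap at time $t$ already gives $N_j(t)\subseteq L$ for $j\in L$ and $N_k(t)\subseteq R$ for $k\in R$, so each update averages only same-block opinions and hence $x_j(t+1)\le x_{(i)}(t)$ for all $j\in L$ and $x_k(t+1)\ge x_{(i+1)}(t)$ for all $k\in R$. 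That directly shows every $L$--$R$ pair is at distance greater than $\epsilon$ at time $t+1$, i.e.\ no edge crosses the partition, which is disconnection by definition since both blocks are nonempty. Note that your one stated worry is misplaced: you claim Lemma~\ref{order-preserving} ``does the real work'' in preventing the blocks from interleaving, but interleaving is already excluded by your own block bounds, and the isolation $N_j(t)\subseteq L$ is a consequence of the gap at time $t$ alone. So your proof is actually self-contained where the paper's depends on the order-preserving lemma; the paper's version is shorter because it bookkeeps only two agents, while yours buys complete independence from Lemma~\ref{order-preserving}.
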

\begin{proof}
Assume $x_1(t)\leq x_2(t)\leq...\leq x_n(t).$ Since $\mathscr{G}(t)$ is disconnected, $$x_{i+1}(t)-x_i(t)>\epsilon\text{ for some }i\in[n-1].$$ Since vertices $i$ and $i+1$ have respectively no neighbors on their right and left at time $t$, $$x_i(t+1)\leq x_i(t)\text{ and }x_{i+1}(t)\leq x_{i+1}(t+1).$$ Hence $x_{i+1}(t+1)-x_i(t+1)>\epsilon.$ From Lemma \ref{order-preserving}, $$x_1(t+1)\leq x_2(t+1)\leq...\leq x_n(t+1).$$ Thus $\mathscr{G}(t+1)$ is disconnected.
\end{proof}

\begin{lemma}
    A consensus cannot be achieved if the initial opinion graph is disconnected.
\end{lemma}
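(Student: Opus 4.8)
The plan is to leverage the disconnected-preserving property (Lemma~\ref{disconnected-preserving}) together with a routine induction. Assuming $\mathscr{G}(0)$ is disconnected, I would first establish by induction on $t$ that $\mathscr{G}(t)$ is disconnected for every $t \geq 0$: the base case is the hypothesis, and the inductive step is precisely the content of Lemma~\ref{disconnected-preserving}, since $\mathscr{G}(t)$ disconnected forces $\mathscr{G}(t+1)$ disconnected.

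Next I would translate disconnectedness into a quantitative separation of the opinions. By the order-preserving property (Lemma~\ref{order-preserving}) the relative order of the agents is fixed in time, so I may write $x_{(1)}(t) \leq x_{(2)}(t) \leq \cdots \leq x_{(n)}(t)$ consistently for all $t$. Disconnectedness of $\mathscr{G}(t)$ means there is an index $i \in [n-1]$ (possibly depending on $t$) with $x_{(i+1)}(t) - x_{(i)}(t) > \epsilon$. Consequently the range obeys
\begin{align*}
\diam\{x_j(t): j \in [n]\} = x_{(n)}(t) - x_{(1)}(t) \geq x_{(i+1)}(t) - x_{(i)}(t) > \epsilon
\end{align*}
for every $t \geq 0$.

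Finally, I would invoke the meaning of consensus: all opinions converge to a common value, so the limiting configuration is a single point and the diameter must tend to zero. This directly contradicts the uniform lower bound $\diam\{x_j(t): j \in [n]\} > \epsilon > 0$ valid for all $t$. Hence no consensus can occur when $\mathscr{G}(0)$ is disconnected.

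I do not anticipate a serious obstacle here, as the statement is an immediate consequence of the two preceding lemmas. The only point deserving care is making precise the notion of consensus (a single surviving cluster, equivalently vanishing diameter) and then observing that the persistent $\epsilon$-gap keeps the diameter bounded away from zero for all time, which is incompatible with convergence to a single point.
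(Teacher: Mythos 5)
Your proof is correct and matches the argument the paper intends: the paper states this lemma without any proof, treating it as an immediate consequence of Lemma~\ref{disconnected-preserving}, and your write-up simply makes that explicit via induction (disconnectedness persists for all $t$) plus the observation that a persistent gap larger than $\epsilon$ keeps the opinion diameter bounded away from zero, which is incompatible with consensus. No gap; this is exactly the intended reasoning, spelled out.
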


\begin{lemma}\label{equvalence relation for consensus}
    A consensus can be achieved if and only if the opinion graph remains connected over time.
\end{lemma}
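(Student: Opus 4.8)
The plan is to prove the two implications separately, reading ``the opinion graph remains connected over time'' as the statement that $\mathscr{G}(t)$ is connected for every $t \geq 0$, and ``consensus'' as the assertion that all opinions eventually coincide.

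For the forward implication I would argue by contraposition. Suppose $\mathscr{G}(t)$ is disconnected for some time $t$. By Lemma~\ref{disconnected-preserving} and a trivial induction, $\mathscr{G}(s)$ is disconnected for every $s \geq t$. Working with the sorted opinions $x_{(1)}(s) \leq \dots \leq x_{(n)}(s)$, disconnectedness yields an index $i$ with $x_{(i+1)}(s) - x_{(i)}(s) > \epsilon$, and the computation in the proof of Lemma~\ref{disconnected-preserving} shows this gap never closes: the agents indexed $1,\dots,i$ and $i+1,\dots,n$ stay separated by more than $\epsilon > 0$ for all time. Hence their limiting opinions differ by more than $\epsilon$, so no consensus is reached. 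This is precisely the contrapositive of ``consensus $\Rightarrow$ connected for all $t$.''

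For the reverse implication I would invoke the finite-time convergence established in \cite{mHK}: there is a time $T$ at which the configuration is stationary, so that $x(T) = x(T+1)$. Since the graph stays connected, $\mathscr{G}(T)$ is connected. Set $m = \min_i x_i(T)$ and $S = \{i : x_i(T) = m\}$. For any $i \in S$, stationarity gives $m = x_i(T) = |N_i(T)|^{-1}\sum_{k \in N_i(T)} x_k(T)$, and since every summand is at least $m$, every neighbor of $i$ must also lie in $S$. Thus no edge leaves $S$, and connectedness of $\mathscr{G}(T)$ forces $S = [n]$. Therefore all opinions coincide at time $T$, so a consensus has been achieved.

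The main obstacle I anticipate is the reverse direction, specifically the clean justification that a stationary, connected configuration must collapse to a single point; the minimum-propagation argument above is the crux. One must be careful that stationarity is genuinely available (this is exactly what the finite-time convergence of \cite{mHK} supplies) and that the averaging identity rigorously forces every neighbor of a minimizing agent to be a minimizer. By contrast, the forward direction is comparatively routine, resting entirely on the persistence of the opinion gap already encoded in Lemma~\ref{disconnected-preserving}.
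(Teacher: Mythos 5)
The paper states this lemma without any proof at all, so there is no argument of the paper's to compare against; your proposal supplies the missing justification, and it is correct. The forward direction is exactly what Lemma~\ref{disconnected-preserving} is designed for: once a sorted gap exceeds $\epsilon$, the paper's proof of that lemma shows the two blocks of agents can only drift apart (the left block's extremal agent moves left or stays, the right block's moves right or stays), and order preservation keeps the partition intact, so the gap of more than $\epsilon$ persists for all later times and rules out consensus, whether consensus is read as exact agreement or agreement in the limit. The reverse direction is the part the paper leaves genuinely unaddressed, and your minimum-propagation argument handles it cleanly: finite-time convergence from \cite{mHK} gives a stationary time $T$; for a minimizing agent $i$, the identity $m = x_i(T+1) = |N_i(T)|^{-1}\sum_{k \in N_i(T)} x_k(T)$ with every summand at least $m$ forces all neighbors of $i$ to be minimizers (note $i \in N_i(T)$ automatically, so the set $S$ of minimizers is nonempty and closed under adjacency); connectedness of $\mathscr{G}(T)$ then yields $S = [n]$. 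Both implications are rigorous as written, and the only external input you rely on, finite-time convergence, is exactly what the paper itself cites from \cite{mHK}.
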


\begin{lemma}\label{equivalent relation for connectedness}
    An opinion graph is connected at time $t$ if and only if edge $((i),(i+1))\in E(t)$ for all $i\in [n-1].$
\end{lemma}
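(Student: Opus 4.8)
The plan is to prove Lemma~\ref{equivalent relation for connectedness}, characterizing connectedness of the opinion graph in terms of adjacency between consecutive (order-statistic) opinions.

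The plan is to establish both implications directly from the linear order of the opinions, so I would begin by relabeling the agents so that $x_{(1)}(t)\le x_{(2)}(t)\le\cdots\le x_{(n)}(t)$, and recall that by the definition of $E(t)$ we have $((i),(j))\in E(t)$ precisely when $|x_{(i)}(t)-x_{(j)}(t)|\le\epsilon$. The whole argument then reduces to the observation that, once opinions are sorted, the difference between any two of them is controlled by the intermediate consecutive gaps.

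For the sufficiency direction, suppose $((i),(i+1))\in E(t)$ for every $i\in[n-1]$. Then the vertices $(1),(2),\dots,(n)$, taken in this order, form a path in $\mathscr{G}(t)$ that visits every vertex, since each consecutive pair is joined by an edge. A graph containing a spanning path is connected, so $\mathscr{G}(t)$ is connected.

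For the necessity direction I would argue by contraposition, reusing the gap idea already exploited in the proof of Lemma~\ref{disconnected-preserving}. Suppose $((i),(i+1))\notin E(t)$ for some $i\in[n-1]$; because the opinions are ordered this is equivalent to $x_{(i+1)}(t)-x_{(i)}(t)>\epsilon$. Split the vertex set into the left block $L=\{(1),\dots,(i)\}$ and the right block $R=\{(i+1),\dots,(n)\}$. For any $(a)\in L$ and $(b)\in R$ we have $a\le i<i+1\le b$, and hence $x_{(b)}(t)-x_{(a)}(t)\ge x_{(i+1)}(t)-x_{(i)}(t)>\epsilon$, so $((a),(b))\notin E(t)$. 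Therefore no edge joins $L$ to $R$, and $\mathscr{G}(t)$ is disconnected.

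The argument is elementary, and I do not anticipate a substantial obstacle. The only point requiring care is the cross-block estimate in the necessity direction: it is the monotonicity of the order statistics that guarantees that every pair straddling the gap has opinion difference at least $x_{(i+1)}(t)-x_{(i)}(t)$, and this is what rules out any edge across the partition. Beyond this bookkeeping the proof is immediate.
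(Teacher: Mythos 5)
Your proof is correct. The paper states Lemma~\ref{equivalent relation for connectedness} without any proof, treating it as immediate; your argument --- a spanning path through the ordered opinions for sufficiency, and the gap/contraposition argument for necessity, the same device the paper itself uses in proving Lemma~\ref{disconnected-preserving} --- is precisely the elementary reasoning the paper implicitly relies on, so there is nothing to add.
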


\begin{lemma}\label{condition to keep edge}
    Edge $(i,j)\in E(t+1)$ if  $(i,j)\in E(t)$  with $$2\max\{|N_{i}(t)-N_{j}(t)|,\ |N_{j}(t)-N_{i}(t)|\}\leq |N_{i}(t)\cap N_{j}(t)|.$$
\end{lemma}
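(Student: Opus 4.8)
The plan is to reduce the statement to the quantitative estimate already latent in the proof of Lemma~\ref{order-preserving}. Write $x=x(t)$, $x'=x(t+1)$, $N_i=N_i(t)$, $N_j=N_j(t)$, and set $a=|N_i\cap N_j|$, $b=|N_j-N_i|$, $c=|N_i-N_j|$, so that $|N_j|=a+b$ and $|N_i|=a+c$. Since both the hypothesis $2\max\{b,c\}\le a$ and the conclusion $(i,j)\in E(t+1)$ are symmetric in $i$ and $j$, I may assume $x_i\le x_j$; then $(i,j)\in E(t)$ gives $x_j-x_i\le\epsilon$, and Lemma~\ref{order-preserving} gives $x_i'\le x_j'$, so it suffices to show $x_j'-x_i'\le\epsilon$.

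First I would invoke the exact decomposition produced in the proof of Lemma~\ref{order-preserving}: applying Lemma~\ref{matching} to
$$x_j'-x_i'=\Bigl(\tfrac{1}{|N_j|}-\tfrac{1}{|N_i|}\Bigr)\sum_{k\in N_i\cap N_j}x_k+\tfrac{1}{|N_j|}\sum_{k\in N_j-N_i}x_k-\tfrac{1}{|N_i|}\sum_{k\in N_i-N_j}x_k$$
writes $x_j'-x_i'=\sum_k a_k(x_{p_k}-x_{q_k})$ with $a_k\ge0$, where in the case $|N_j|\ge|N_i|$ the indices run over $p_k\in N_j-N_i$, $q_k\in N_i$, and in the case $|N_j|<|N_i|$ over $p_k\in N_j$, $q_k\in N_i-N_j$. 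Lemma~\ref{matching} also pins down the total weight $\sum_k a_k$ as the sum of the positive coefficients; a direct bookkeeping gives $\sum_k a_k=\tfrac{b}{a+b}$ in the first case and $\sum_k a_k=\tfrac{c}{a+c}$ in the second, i.e. $\sum_k a_k=\tfrac{m}{a+m}$ with $m=\max\{b,c\}$.

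The second step is a uniform bound on each matched difference. In either case $p_k\in N_j$ forces $x_{p_k}\le x_j+\epsilon$ and $q_k\in N_i$ forces $x_{q_k}\ge x_i-\epsilon$, so
$$0\le x_{p_k}-x_{q_k}\le (x_j+\epsilon)-(x_i-\epsilon)=(x_j-x_i)+2\epsilon\le 3\epsilon,$$
the nonnegativity coming from Claims~(1)--(2) and the last inequality from $x_j-x_i\le\epsilon$. Combining with the weight identity, $x_j'-x_i'\le 3\epsilon\sum_k a_k=3\epsilon\cdot\tfrac{m}{a+m}$.

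Finally, I would close with the hypothesis $2m\le a$, which is exactly $a+m\ge 3m$, whence $\tfrac{m}{a+m}\le\tfrac13$ and $x_j'-x_i'\le\epsilon$; thus $|x_i'-x_j'|\le\epsilon$ and $(i,j)\in E(t+1)$. I expect the only delicate point to be the bookkeeping that the total matched weight is precisely $\tfrac{m}{a+m}$ rather than something larger; once that identity and the crude per-term estimate $x_{p_k}-x_{q_k}\le 3\epsilon$ are in hand, the stated threshold $2\max\{b,c\}\le a$ emerges as exactly the condition making the product at most $\epsilon$.
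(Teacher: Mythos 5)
Your proof is correct, and while it shares the paper's core mechanism---matching positive-coefficient terms against negative-coefficient terms and applying the triangle inequality---your route through the bookkeeping is genuinely different and arguably cleaner. The paper's proof is self-contained: it puts $x_1^\star-x_2^\star$ over the common denominator $(n_1+n_2)(n_2+n_3)$, matches $(n_2+n_3)n_1$ differences of type $a_i-c_j$ (each bounded by $3\epsilon$) together with $(n_3-n_1)n_2$ differences of type $b_i-c_j$ (each bounded by the sharper $2\epsilon$, since both opinions lie in $N_2$), and closes with the polynomial inequality $n_1n_2+3n_1n_3+2n_2n_3\leq(n_1+n_2)(n_2+n_3)\iff 2n_1n_3+n_2n_3\leq n_2^2$, verified from $2n_1\leq n_2$ and $2n_3\leq n_2$. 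You instead recycle the decomposition already extracted in the proof of Lemma~\ref{order-preserving}, invoke the total-weight identity of Lemma~\ref{matching}, and use a single uniform per-term bound of $3\epsilon$. Your weight computation checks out: when $|N_j|\geq|N_i|$ the positive coefficients are $b$ copies of $1/(a+b)$, summing to $b/(a+b)$; otherwise they sum to $a\bigl(1/(a+b)-1/(a+c)\bigr)+b/(a+b)=c/(a+c)$; in both cases this is $m/(a+m)$ with $m=\max\{b,c\}$, and since $i,j\in N_i\cap N_j$ we have $a\geq 2$, so nothing degenerates. Note that you lean on Lemma~\ref{order-preserving} twice---for the decomposition and for the lower bound $x_j'-x_i'\geq 0$---whereas the paper's argument needs neither, as it bounds the absolute value directly via the $\ll$ notation. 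What each approach buys: yours makes the constant in the hypothesis transparent, since $3\epsilon\cdot m/(a+m)\leq\epsilon$ is exactly $2m\leq a$, and it avoids all polynomial manipulation; the paper's finer $2\epsilon$ bound on the $b$--$c$ pairs actually establishes edge retention under conditions slightly weaker than the stated hypothesis (e.g.\ $n_1=0$ with only $n_3\leq n_2$ already suffices in its computation), a sharpness your uniform $3\epsilon$ bound gives up, though for the lemma as stated both arguments use exactly the same hypothesis and both are complete.
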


\begin{proof}
    Without loss of generality, we claim that $(1,2)\in E(t)$ implies $(1,2)\in E(t+1)$ for all $t\geq 0.$ Let $x_i=x_i(t)$, $x_i^\star=x_i(t+1)$ and $N_i=N_i(t)$ for $i=1,2$ and let $n_1=|N_1-N_2|$, $n_2=|N_1\cap N_{2}|$ and $n_3=|N_{2}-N_1|$. Say $(a_i)=(x_i)_{i\in N_1-N_2}$, $(b_i)=(x_i)_{i\in N_1\cap N_2}$ and $(c_i)=(x_i)_{i\in N_2-N_1}.$ Then, 
    \begin{align}
         x_1^\star-x_{2}^\star&=\frac{\sum_{i}a_i+\sum_{i}b_i}{n_1+n_2}-\frac{\sum_{i}b_i+\sum_{i}c_i}{n_2+n_3}\notag\\
        &=\frac{(n_2+n_3)\sum_i a_i+(n_3-n_1)\sum_i b_i-(n_1+n_2)\sum_i c_i}{(n_1+n_2)(n_2+n_3)}.
    \end{align}
    By symmetry, assuming that $n_3\geq n_1$, we match $(n_2+n_3)n_1$ terms of form $a_i$ and $(n_3-n_1)n_2$ terms of form $b_i$ with $(n_1+n_2)n_3=(n_2+n_3)n_1+(n_3-n_1)n_2$ terms of form $c_i$, therefore by the triangle inequality,
    \begin{align*}
        &(n_2+n_3)\sum_i a_i+(n_3-n_1)\sum_i b_i-(n_1+n_2)\sum_i c_i=\sum_i(\hat{a}_i-\hat{c}_i)+\sum_i(\tilde{b}_i-\tilde{c}_i)\\
        &\hspace{1cm}\ll (n_2+n_3)n_1 3\epsilon+(n_3-n_1)n_2 2 \epsilon=\epsilon(n_1n_2+3n_1n_3+2n_2n_3).
    \end{align*}
    Observe that 
    \begin{equation*}
        n_1n_2+3n_1n_3+2n_2n_3\leq (n_1+n_2)(n_2+n_3)\iff 2n_1n_3+n_2n_3\leq n_2^2.
    \end{equation*}
    It follows from the assumption that $2 n_3\leq n_2$, therefore $$2n_1n_3+n_2n_3\leq 2(n_2/2)^2+n_2(n_2/2)=n_2^2,\ \hbox{which implies}\ x_1^\star-x_{2}^\star\ll \epsilon.$$
    Thus, $(1,2)\in E(t+1).$
\end{proof}

\begin{lemma}\label{connected initially}
    The initial opinion is almost surely connected as $n\to \infty.$
\end{lemma}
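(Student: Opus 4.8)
The plan is to reduce connectivity of $\mathscr{G}(0)$ to a statement about the spacings of the uniform order statistics and then control the largest such spacing by a one-line first-moment (union bound) argument. By Lemma~\ref{equivalent relation for connectedness}, $\mathscr{G}(0)$ is connected precisely when $x_{(i+1)}(0)-x_{(i)}(0)\le \epsilon$ for every $i\in[n-1]$. So it suffices to show that the maximal spacing between consecutive initial opinions is at most $\epsilon$ with probability tending to one; since for $\epsilon\ge 1/2$ this threshold is large while the typical maximal spacing of $n$ uniform points is of order $(\log n)/n$, this should hold comfortably.

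The concrete device I would use is a partition. Set $m=\ceil{2/\epsilon}$ and split $[0,1]$ into the $m$ cells $I_k=[(k-1)/m,\,k/m]$, $k\in[m]$, each of length $1/m\le \epsilon/2$. The key is a \emph{deterministic} implication: if every cell $I_k$ contains at least one of the points $x_1(0),\dots,x_n(0)$, then $\mathscr{G}(0)$ is connected. Indeed, if some spacing $x_{(i+1)}(0)-x_{(i)}(0)$ were to exceed $2/m$, then the open interval $(x_{(i)}(0),x_{(i+1)}(0))$ would have length greater than $2/m$ and would therefore contain an entire cell $I_k$; but that cell is nonempty, contradicting the fact that no opinion lies strictly between $x_{(i)}(0)$ and $x_{(i+1)}(0)$. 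Hence every spacing is at most $2/m\le\epsilon$, and connectivity follows from Lemma~\ref{equivalent relation for connectedness}.

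It then remains only to bound the probability that some cell is empty. Since the opinions are i.i.d.\ $\unif([0,1])$, a fixed cell is empty with probability $(1-1/m)^n$, so a union bound over the $m$ cells gives $\mathbf{P}(\text{some cell empty})\le m(1-1/m)^n$. Because $m=\ceil{2/\epsilon}$ is a fixed constant (indeed $m\le 4$ when $\epsilon\ge 1/2$), this bound tends to $0$ as $n\to\infty$, so $\mathbf{P}(\mathscr{G}(0)\text{ connected})\to 1$. Moreover the bound decays geometrically in $n$ and is thus summable, so Borel--Cantelli upgrades the conclusion to almost sure eventual connectedness along the natural coupling in which each $\mathscr{G}(0)$ on $n$ agents is built from the first $n$ terms of a single i.i.d.\ $\unif([0,1])$ sequence.

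I expect the only genuinely non-routine point to be the deterministic geometric claim in the second paragraph: one must choose the cell length small enough ($\le \epsilon/2$) that a spacing can straddle at most two adjacent cells, so that the event ``all cells nonempty'' really does force every consecutive gap below $\epsilon$. The choice $m=\ceil{2/\epsilon}$ is dictated exactly by this requirement, and verifying that an interval of length exceeding $2/m$ must engulf a full cell (hence a point) is the crux. Everything after that—the per-cell emptiness probability, the union bound, and the passage to a limit—is standard.
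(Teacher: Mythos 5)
Your proof is correct, but it takes a genuinely different route from the paper. The paper's entire proof is the one-line observation that $P(\mathscr{G}(0)\ \hbox{disconnected})\leq (1-\epsilon)^{n-2}\to 0$: disconnection forces an empty separating interval of length greater than $\epsilon$, and once the two agents bordering that gap are accounted for, each of the remaining $n-2$ agents must avoid an interval of length $\epsilon$, giving the stated bound (the paper leaves this justification implicit, so it is rather terse). You instead discretize: partition $[0,1]$ into $m=\ceil{2/\epsilon}$ cells of length at most $\epsilon/2$, prove the deterministic claim that an open interval of length exceeding $2/m$ must contain a full cell (this claim is indeed correct: the first cell boundary strictly above the interval's left endpoint lies within $1/m$ of it, so the cell it starts sits strictly inside the interval), and then kill the event that some cell is empty by a union bound over the constantly many cells. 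What your approach buys is full rigor with no hidden conditioning, validity for any fixed $\epsilon>0$ (not just $\epsilon\geq 1/2$), and a geometrically decaying, hence summable, bound $m(1-1/m)^n$ that lets Borel--Cantelli upgrade the conclusion to almost-sure eventual connectedness under the natural coupling --- which matches the lemma's ``almost surely'' phrasing more faithfully than the paper's convergence-in-probability statement. What the paper's approach buys is brevity and a sharper rate, $(1-\epsilon)^{n-2}\approx 2^{-n}$ at $\epsilon=1/2$ versus your $4(3/4)^n$, though both suffice. Both arguments correctly reduce connectivity to the maximal spacing of uniform order statistics via Lemma~\ref{equivalent relation for connectedness}.
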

\begin{proof}
    Observe that $P(\mathscr{G}(0)\ \hbox{disconnected})\leq (1-\epsilon)^{(n-2)}\to 0$ as $n\to \infty,$ which implies $P(\mathscr{G}(0)\ \hbox{disconnected})\to 0$ as $n\to \infty.$ 
\end{proof}

\begin{lemma}\label{condition to preserve connectedness}
    Given confidence threshold $\epsilon\geq 1/2$. Let $\mathscr{H}_t$ be the following statement: $((1),(\ceil{n/2}))$, $ ((\ceil{n/2}),(n))\in E(t)$ with
    $$\begin{array}{rcl}
      2\max\{|N_{(1)}(t)-N_{(\ceil{n/2})}(t)|,\ |N_{(\ceil{n/2})}(t)-N_{(1)}(t)|\}&\n\leq\n& |N_{(1)}(t)\cap N_{(\ceil{n/2})}(t)|\vspace{2pt} \\
         2\max\{|N_{(\ceil{n/2})}(t)-N_{(n)}(t)|,\ |N_{(n)}(t)-N_{(\ceil{n/2})}(t)|\}&\n\leq\n& |N_{(\ceil{n/2})}(t)\cap N_{(n)}(t)|
    \end{array}$$ 
    Then as $n\to\infty$, $\mathscr{H}_t$ holds for all $t\geq 0.$
\end{lemma}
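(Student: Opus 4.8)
The plan is to prove $\mathscr{H}_t$ by induction on $t$, the engine being the monotone contraction of the opinion range together with concentration of the uniform order statistics; the role of $\epsilon\geq 1/2$ is to force the median's confidence window to swallow the whole population after a single step, which is what ultimately makes the balanced inequalities hold with room to spare.

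I would first record the order-statistic estimates that hold with probability tending to $1$ as $n\to\infty$: namely $x_{(1)}(0)=o(1)$, $x_{(n)}(0)=1-o(1)$ and $x_{(\ceil{n/2})}(0)=\tfrac12+o(1)$. Since $\epsilon\geq 1/2$, the window $[x_{(\ceil{n/2})}(0)-\epsilon,\,x_{(\ceil{n/2})}(0)+\epsilon]$ asymptotically contains $[0,1]$, so $N_{(\ceil{n/2})}(0)=[n]$, which places the anchor edges $((1),(\ceil{n/2}))$ and $((\ceil{n/2}),(n))$ in $E(0)$. Next I would establish that the range $R_t=x_{(n)}(t)-x_{(1)}(t)$ is non-increasing: the neighbors of the smallest agent all sit at or above $x_{(1)}(t)$, so $x_{(1)}(t+1)\geq x_{(1)}(t)$, and symmetrically $x_{(n)}(t+1)\leq x_{(n)}(t)$, both extremal agents retaining their rank by Lemma~\ref{order-preserving}. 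A direct evaluation of the first update gives $x_{(1)}(1)=\epsilon/2+o(1)$ and $x_{(n)}(1)=1-\epsilon/2+o(1)$, hence $R_1=1-\epsilon+o(1)\leq\epsilon+o(1)$, and by monotonicity $R_t\leq\epsilon+o(1)$ for every $t\geq 1$.

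The inductive step deduces the two balanced inequalities from this range bound. Once $R_t\leq\epsilon+o(1)$, the width-$\epsilon$ window around the extreme agent $(1)$ omits only vertices whose opinion exceeds $x_{(1)}(t)+\epsilon$, i.e. those in a top slice of length $R_t-\epsilon=o(1)$; as the opinion density is $O(n)$ per unit length at this stage, that is $o(n)$ vertices, and likewise for agent $(n)$, while the central median still has $N_{(\ceil{n/2})}(t)=[n]$. Therefore $|N_{(1)}(t)-N_{(\ceil{n/2})}(t)|$ and $|N_{(\ceil{n/2})}(t)-N_{(1)}(t)|$ are $o(n)$ whereas $|N_{(1)}(t)\cap N_{(\ceil{n/2})}(t)|=n-o(n)$, so the factor-$2$ inequality holds for all large $n$, and symmetrically for $((\ceil{n/2}),(n))$. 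This is exactly $\mathscr{H}_t$; feeding it into Lemma~\ref{condition to keep edge} keeps both anchor edges in $E(t+1)$, which closes the induction.

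The main obstacle is the borderline threshold $\epsilon=1/2$, where $R_1=\tfrac12+o(1)$ sits precisely at $\epsilon$: here the extreme windows genuinely miss $\Theta(\sqrt n)$ vertices, so one cannot pretend the graph is complete at time $1$ and must instead control these discrepancies quantitatively. The crux is thus to bound, uniformly in $t$, the number of agents in the length-$(R_t-\epsilon)$ shortfall interval by $o(n)$; the factor-$2$ slack built into Lemma~\ref{condition to keep edge} is exactly what absorbs this error, and after $t=1$ the range drops strictly below $\epsilon$, reducing every neighborhood to $[n]$ and making the inequalities trivial. I note that at $t=0$ the extreme windows contain only about $\epsilon n$ vertices, so the balanced inequalities are substantive only from $t\geq 1$ onward, with the connectedness required at $t=0$ supplied directly by Lemma~\ref{connected initially}.
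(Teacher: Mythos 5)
Your route is genuinely different from the paper's: the paper stays with the initial configuration, using the strong law of large numbers to show that the endpoints of any initial edge share at least $n/2+1$ common neighbors, asserts $\mathscr{H}_0$ from this, and then inducts via Lemmas~\ref{condition to keep edge} and \ref{order-preserving}; you instead propagate a range bound $R_t\leq\epsilon+o(1)$ and try to re-derive the balanced inequalities at each step from concentration of the updated profile. But as a proof of the lemma \emph{as stated} your proposal has a genuine gap at $t=0$: the statement asserts $\mathscr{H}_t$ for all $t\geq 0$, and you explicitly decline to establish the inequalities at $t=0$, substituting Lemma~\ref{connected initially}. This is not a cosmetic substitution. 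Your own numbers show why: at time $0$ one has $|N_{(1)}(0)\cap N_{(\ceil{n/2})}(0)|\approx\epsilon n$ while $|N_{(\ceil{n/2})}(0)-N_{(1)}(0)|\approx(1-\epsilon)n$, so the first inequality of $\mathscr{H}_0$ forces $2(1-\epsilon)\leq\epsilon$, i.e.\ $\epsilon\geq 2/3$; for $\epsilon\in[1/2,2/3)$ it fails with probability tending to one. Worse, your claim that $N_{(\ceil{n/2})}(0)=[n]$ (used to put the anchor edges in $E(0)$) is false at $\epsilon=1/2$: the median is $1/2+\Theta(n^{-1/2})$ while $x_{(1)}(0)$ and $1-x_{(n)}(0)$ are $O(n^{-1})$, so with probability tending to one either $x_{(\ceil{n/2})}(0)-x_{(1)}(0)>\epsilon$ or $x_{(n)}(0)-x_{(\ceil{n/2})}(0)>\epsilon$, and one anchor edge is already missing. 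So what you outline is (at best) a proof of a modified statement for $t\geq 1$, and your computations in fact contradict the conclusion $\mathscr{H}_0$ that the paper's own proof claims to obtain from its intersection bound.

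Independently, your inductive step for $t\geq 1$ rests on two asserted but unproved quantitative claims. First, ``the opinion density is $O(n)$ per unit length'' is not a property that survives the dynamics: opinions cluster after updating (for $\epsilon=1/2$, at time $2$ essentially all $n$ agents lie in an interval of length $o(1)$), so ``length $o(1)$ times density $O(n)$ equals $o(n)$ agents'' is not a valid counting principle in general, and it is precisely the uniform-in-$t$ control of the shortfall interval that you identify as the crux. Second, the statement that actually rescues the induction --- ``after $t=1$ the range drops strictly below $\epsilon$,'' after which every neighborhood is $[n]$ and the inequalities are trivial --- is given no argument. It is provable: one needs a uniform (Glivenko--Cantelli/DKW type) estimate showing the time-$1$ profile is within $o(1)$ of the deterministic map $y\mapsto\frac{1}{2}\bigl(\max(y-\epsilon,0)+\min(y+\epsilon,1)\bigr)$, whence the shortfall interval at time $1$ contains $o(n)$ agents, and then $x_{(1)}(2)$ and $x_{(n)}(2)$ are each averages over $n-o(n)$ of the agents and so differ by $o(1)$. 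That estimate is the real mathematical content of your approach, and without it the induction closes only by assertion.
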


\begin{proof}

   Given confidence threshold $\epsilon\geq 1/2$. We claim that edge $(i,j)\in E(0)$ implies $|N_i(0)\cap N_j(0)|=X+2\to (n-2)p+2\geq n/2+1$ for $n$ large. Without loss of generality, let edge $(1,2)\in E(0)$ and $X=\sum_{k=3}^n\mathbbm{1}\{x_k(0)\in N_1(0)\cap N_2(0)\}$. Then,  $\mathbbm{1}\{x_k(0)\in N_1(0)\cap N_2(0)\}=\rm{Bernoulli}(p)$ with $p\geq \epsilon$ for all $k=3,\ldots,n.$ By the strong law of large numbers, $X/(n-2)\to p$ almost surely as $n\to \infty$ so $|N_1(0)\cap N_2(0)|=X+2\to (n-2)p+2\geq n/2-1+2=n/2+1$ for $n$ large. Hence, by Lemmas~\ref{connected initially} and \ref{equivalent relation for connectedness}, $\mathscr{H}_0$ holds. For $t\geq 1$, by Lemmas~\ref{condition to keep edge} and \ref{order-preserving}, $\mathscr{H}_{t-1}$ implies $\mathscr{H}_t.$ By induction, we are done.
    
\end{proof}

\begin{proof}[\bf Proof of Theorem~\ref{thm:critical value}]
    By Lemmas~\ref{equvalence relation for consensus} and \ref{condition to preserve connectedness}, a consensus can almost surely be achieved as $n\to \infty$ for confidence threshold $\epsilon\geq 1/2$.
\end{proof}

\section{Statements and Declarations}
\subsection{Competing Interests}
The author is supported by NSTC grant.

\subsection{Data availability}
No associated data was used.

\nocite{*}


\begin{thebibliography}{1}

\bibitem{castellano2009statistical}
C.~Castellano, S.~Fortunato, and V.~Loreto.
\newblock Statistical physics of social dynamics.
\newblock {\em Reviews of modern physics}, 81(2):591, 2009.

\bibitem{MR3069370}
N.~Lanchier.
\newblock The critical value of the {D}effuant model equals one half.
\newblock {\em ALEA Lat. Am. J. Probab. Math. Stat.}, 9(2):383--402, 2012.

\bibitem{lanchier2020probability}
N.~Lanchier and H.-L. Li.
\newblock Probability of consensus in the multivariate {D}effuant model on
  finite connected graphs.
\newblock {\em Electronic Communications in Probability}, 25:1--12, 2020.

\bibitem{lanchier2022consensus}
N.~Lanchier and H.-L. Li.
\newblock Consensus in the {H}egselmann--{K}rause model.
\newblock {\em Journal of Statistical Physics}, 187(3):1--13, 2022.

\bibitem{mHK}
H.-L. Li.
\newblock Mixed {H}egselmann-{K}rause dynamics.
\newblock {\em Discrete and Continuous Dynamical Systems - B},
  27(2):1149--1162, 2022.

\bibitem{mHK2}
H.-L. Li.
\newblock Mixed {H}egselmann-{K}rause dynamics {II}.
\newblock {\em Discrete and Continuous Dynamical Systems - B},
  28(5):2981--2993, 2023.

\end{thebibliography}
\end{document}